\newcommand{\bR}{\mathbb{R}}
\newcommand{\bfx}{\mathbf{x}}
\newcommand{\bfw}{\mathbf{w}}
\newcommand{\bfy}{\mathbf{y}}
\newcommand{\bmeta}{\bm{\eta}}
\newcommand{\bmphi}{\bm{\phi}}
\newcommand{\bmpsi}{\bm{\psi}}
\newcommand{\bfe}{\mathbf{e}}
\newcommand{\bmxi}{\bm{\xi}}
\newlength{\leftstackrelawd}
\newlength{\leftstackrelbwd}
\def\leftstackrel#1#2{\settowidth{\leftstackrelawd}%
{${{}^{#1}}$}\settowidth{\leftstackrelbwd}{$#2$}%
\addtolength{\leftstackrelawd}{-\leftstackrelbwd}%
\leavevmode\ifthenelse{\lengthtest{\leftstackrelawd>0pt}}%
{\kern-.5\leftstackrelawd}{}\mathrel{\mathop{#2}\limits^{#1}}}
\numberwithin{equation}{section}
\newtheorem{theorem}{Theorem}
\newtheorem{lemma}{Lemma}[section]
\newtheorem{remark}[lemma]{Remark}
\newtheorem{proposition}[lemma]{Proposition}
\newtheorem{assumption}{Assumption}
\title[Representation of Anti-Symmetric Functions]{Exact and Efficient Representation of Totally Anti-Symmetric Functions}
\author{Ziang Chen}
\address{(ZC) Department of Mathematics, Massachusetts Institute of Technology, 77 Massachusetts Avenue, Cambridge, MA 02139.}
\email{ziang@mit.edu}
\author{Jianfeng Lu}
\address{(JL) Departments of Mathematics, Physics, and Chemistry, Duke University, Box 90320, Durham, NC 27708.}
\email{jianfeng@math.duke.edu}
\date{\today}
\thanks{The work of ZC is supported in part by the National Science Foundation via award DMS-2509011. The work of JL is supported in part by National Science Foundation via awards DMS-2012286 and DMS-2309378. We thank Jiequn Han for some helpful discussions.}
\thanks{Corresponding author: Ziang Chen (ziang@mit.edu)}
\begin{document}

\begin{abstract}
This paper concerns the long-standing question of representing (totally) anti-symmetric functions in high dimensions. 
We propose a new ansatz based on the composition of an odd function with a fixed set of anti-symmetric basis functions. We prove that this ansatz can exactly represent every anti-symmetric and continuous function and the number of basis functions has efficient scaling with respect to dimension (number of particles). The singular locus of the anti-symmetric basis functions is precisely identified.
\end{abstract}

\maketitle

\section{Introduction}

Let $d\geq 1$, $n\geq 1$, and $\Omega\subset\bR^d$. We say that a function $f:\Omega^n\to\bR$ is (totally) symmetric if
\begin{equation}\label{eq:def_sym}
    f(\bfx_{\sigma(1)},\bfx_{\sigma(2)},\dots,\bfx_{\sigma(n)}) = f(\bfx_1,\bfx_2,\dots,\bfx_n),\quad\forall~\sigma\in S_n,
\end{equation}
where $S_n$ is the group of all permutations on $\{1,2,\dots,n\}$, and is (totally) anti-symmetric if
\begin{equation}\label{eq:def_antisym}
    f(\bfx_{\sigma(1)},\bfx_{\sigma(2)},\dots,\bfx_{\sigma(n)}) = (-1)^\sigma f(\bfx_1,\bfx_2,\dots,\bfx_n),\quad\forall~\sigma\in S_n,
\end{equation}
where $(-1)^\sigma$ is the signature of the permutation $\sigma$. 

This work concerns the representation of anti-symmetric functions, which is a problem that dates back at least to Cauchy \cite{Cauchy:1815} and has received intense research over the years due to its ultimate importance in the study of quantum many-body systems. Indeed, according to the Pauli exclusion principle, the many-body wavefunctions for any identical fermions have to be anti-symmetric, and thus it has been a long-standing problem for finding efficient ansatz for anti-symmetric functions, in the literature of chemistry, physics, and related fields. 

Conventional approaches for the representation of anti-symmetric functions center around Slater determinants, which is essentially the starting point of quantum chemistry, known as the Hartree-Fock method. Many attempts have been made to generalize the ansatz beyond Hartree-Fock, including Slater-Jastrow wave functions, the backflow transformation \cite{feynman1956energy}, etc. In particular, inspired by the recent advances in machine learning for approximating high dimensional function using neural networks, there has been a surge of activities in recent years exploring the ansatz based on neural networks, see e.g., \cites{acevedo2020vandermonde,carleo2017solving,choo2020fermionic,han2019solving,hermann2020deep,hutter2020representing,luo2019backflow,pfau2020ab,schatzle2021convergence,spencer2020better,barrett2022autoregressive,li2022ab,von2022self,pescia2022neural, robledo2022fermionic}. 

By far most of the attempts are based on linear combinations of determinants, such as the back-flow transformation. The representational power of such ansatz has been analyzed in some recent works \cites{han2019universal, hutter2020representing, zweig2022towards}. 
Unfortunately, a recent negative result obtained by one of us with collaborators \cite{huang2023geometry} shows that an exponential number (wrt number of particles in the system) of determinants is needed in general. This calls for new ansatzes for anti-symmetric functions going beyond the linear combination of determinants, still popular in the application literature.

In this work, based on this line of thought, we propose a new ansatz for anti-symmetric functions, inspired by our recent result for symmetric case \cite{chen2022representation}, where we proved the universal representation for symmetric function based on the composition of a multi-variable function with generators of symmetric polynomials \cites{briand2004algebra, weyl1946classical}, and a more recent work \cite{wang2023polynomial} that use symmetric functions distinguishing different orbits rather than generators of symmetric polynomials. % Our ansatz is based on the composition of an odd function with a set of anti-symmetric basis functions, as will be made more precise in the next section. 
As the main result of this work, %we establish that our ansatz gives an exact representation of anti-symmetric and continuous functions. 
we establish that there exists a set of anti-symmetric basis functions, such that any anti-symmetric and continuous function can be expressed uniquely as the composition of a continuous odd function with the set of basis functions, which will be stated more precisely in the next section. Furthermore, our ansatz is efficient in the sense that the set of basis functions does not suffer from the curse of dimensionality (CoD), i.e., its cardinality scales at most polynomially in the dimension, which is of essential importance in terms of computational complexity. To the best of our knowledge, this is the first such representation for anti-symmetric functions in high dimensions. We also establish that the singular locus of some explicit basis functions is precisely the set of configurations with at least two identical particles, and show with examples that the regularity of the odd function in the representation may not be as good as the original anti-symmetric function.

\section{Results}

\subsection{Representation theorem for anti-symmetric functions}
We first state the required properties of our anti-symmetric basis functions.

\begin{assumption}\label{asp:eta}
    Given $d,n\geq 1$ and $\Omega\subseteq\bR^d$. Assume that $\bmeta = (\eta_1,\eta_2,\dots,\eta_m):\Omega^n\to\bR^m$ satisfies the followings:
    \begin{itemize}
        \item[(i)] $\eta_k:\Omega^n\to\bR$ is anti-symmetric and continuous for each $k\in\{1,2,\dots,m\}$.
        \item[(ii)] $\bmeta(\bfx_1,\bfx_2,\dots,\bfx_n) = 0$ if and only if $\bfx_i=\bfx_j$ for some $i,j\in\{1,2,\dots,n\}$ with $i\neq j$.
        \item[(iii)] If $\bmeta(\bfx_1,\bfx_2,\dots,\bfx_n) = \bmeta(\bfx_1',\bfx_2',\dots,\bfx_n') \neq 0$, then there exists a permutation $\sigma\in S_n$ such that $(\bfx_1',\bfx_2',\dots,\bfx_n') = (\bfx_{\sigma(1)},\bfx_{\sigma(2)},\dots,\bfx_{\sigma(n)})$.
    \end{itemize}
\end{assumption}

\begin{remark}\label{rmk:x_i=x_j}
    For any anti-symmetric function $f:\Omega^n\to\bR$, one always has $f(\bfx_1,\bfx_2,\dots,\bfx_n) = 0$ if $\bfx_i=\bfx_j$ for some $i\neq j$, which follows directly from \eqref{eq:def_antisym} by considering $\sigma\in S_n$ that switches $i$ and $j$.
\end{remark}

In Assumption~\ref{asp:eta}, Condition (ii) states that $\bmeta$ can identify $(\bfx_1,\bfx_2,\dots,\bfx_n)$ for which any two vectors differ, and Condition (iii) requires that $\bmeta$ has sufficient separation power to distinguish different orbits of $S_n$ as long as they represent sets of distinct vectors. The construction of $\bmeta$ satisfying Assumption~\ref{asp:eta} will be discussed in Section~\ref{sec:construct_eta} and our main representation theorem for anti-symmetric functions is as follows.

\begin{theorem}\label{thm:main}
    Given $d,n\geq 1$, let $\Omega\subset\bR^d$ be compact and let $\bmeta:\Omega^n\to\bR^m$ satisfy Assumption~\ref{asp:eta}. For any anti-symmetric continuous function $f:\Omega^n\to\bR$, there exists a unique $g:\bmeta(\Omega^n)\to \bR$ that is continuous and odd, and satisfies
    \begin{equation}\label{eq:decomp}
        f(\bfx_1,\bfx_2,\dots,\bfx_n) = g\left(\bmeta(\bfx_1,\bfx_2,\dots,\bfx_n)\right),\quad\forall~\bfx_1,\bfx_2,\dots,\bfx_n\in \Omega,
    \end{equation}
    where $\bmeta(\Omega^n)$ is equipped with the topology induced from $\bR^m$.
\end{theorem}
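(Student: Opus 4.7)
The plan is to build $g$ by the tautological prescription $g(\bfy)=f(\bfx)$ for any $\bfx\in\bmeta^{-1}(\bfy)$, and then verify in turn that this is well defined, odd, and continuous; uniqueness is automatic since $\bmeta$ is surjective onto $\bmeta(\Omega^n)$. The key input is that Condition (iii) constrains the preimage of any nonzero $\bfy$ to a single $S_n$-orbit, while Condition (ii) together with Remark~\ref{rmk:x_i=x_j} force $f$ to vanish on $\bmeta^{-1}(0)$, so the values of $f$ are compatible with the fibers of $\bmeta$.

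For well-definedness, I would handle $\bfy=0$ and $\bfy\neq 0$ separately. If $\bfy=0$, set $g(0)=0$; any $\bfx$ with $\bmeta(\bfx)=0$ has two coincident entries by (ii), and hence $f(\bfx)=0$ by Remark~\ref{rmk:x_i=x_j}. If $\bfy\neq 0$ and $\bmeta(\bfx)=\bmeta(\bfx')=\bfy$, then (iii) yields a permutation $\sigma\in S_n$ with $\bfx'=\sigma\cdot\bfx$; componentwise anti-symmetry of $\bmeta$ then gives $\bfy=\bmeta(\bfx')=(-1)^\sigma\bfy$, forcing $(-1)^\sigma=+1$ since $\bfy\neq 0$, and thus $f(\bfx')=(-1)^\sigma f(\bfx)=f(\bfx)$. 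For oddness, note that $\bmeta(\Omega^n)$ is symmetric about the origin (apply any transposition in $S_n$), and for $\bfy=\bmeta(\bfx)\neq 0$ one has $-\bfy=\bmeta(\tau\cdot\bfx)$ for any transposition $\tau$, so $g(-\bfy)=f(\tau\cdot\bfx)=-f(\bfx)=-g(\bfy)$; and $g(0)=0$ by construction.

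The main obstacle is continuity, which is delicate near $\bfy=0$ because the preimage there is not a single orbit. I would argue by compactness and sequential continuity. Let $\bfy_k\to\bfy$ in $\bmeta(\Omega^n)$; it suffices to show that every subsequence has a further subsequence along which $g(\bfy_k)\to g(\bfy)$. Choose $\bfx^{(k)}\in\Omega^n$ with $\bmeta(\bfx^{(k)})=\bfy_k$. Compactness of $\Omega^n$ extracts a subsequence $\bfx^{(k_j)}\to\bfx^\ast$, and continuity of $\bmeta$ gives $\bmeta(\bfx^\ast)=\bfy$. If $\bfy\neq 0$, then $\bfx^\ast$ has pairwise distinct entries by the contrapositive of (ii), so $g(\bfy)=f(\bfx^\ast)$ by definition; if $\bfy=0$, then $\bfx^\ast$ has two coincident entries and $f(\bfx^\ast)=0=g(0)$. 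Either way, continuity of $f$ yields $g(\bfy_{k_j})=f(\bfx^{(k_j)})\to f(\bfx^\ast)=g(\bfy)$.

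Uniqueness is immediate: any continuous odd $g'$ satisfying \eqref{eq:decomp} must equal $g$ on the image $\bmeta(\Omega^n)$ by construction. The hardest step is continuity, and the compactness of $\Omega$ is essential to it: it is what lets the potentially singular case $\bfy=0$ be absorbed into the anti-symmetric vanishing of $f$ at coincident configurations, and this is precisely where the compactness hypothesis in the theorem is used.
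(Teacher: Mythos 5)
Your proposal is correct and follows essentially the same route as the paper: define $g$ fiberwise by $g(\bmeta(\bfx))=f(\bfx)$, check well-definedness by splitting into $\bfy=0$ (using Condition (ii) and Remark~\ref{rmk:x_i=x_j}) and $\bfy\neq 0$ (using Condition (iii) and the sign argument forcing $(-1)^\sigma=1$), and verify oddness via an odd permutation. The only difference is cosmetic and lies in the continuity step, where you argue by sequential compactness and subsequence extraction in $\Omega^n$, while the paper shows $g^{-1}(E)=\bmeta(f^{-1}(E))$ is compact, hence closed, for every closed $E\subset\bR$; both verifications rely on the compactness of $\Omega$ in the same essential way.
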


Theorem~\ref{thm:main} essentially states that with a set $\bmeta$ of anti-symmetric basis functions, the problem of representing anti-symmetric functions can be reduced to representing odd functions. Let us remark that odd functions are much easier to represent since the group $\{\pm 1\}$ is of very small size which allows us to do efficient anti-symmetrization. More explicitly, one can take any continuous $h:\bmeta(\Omega^n)\to\bR$ and set $g(\bfy) = \frac{h(\bfy) - h(-\bfy)}{2}$.

\begin{proof}[Proof of Theorem~\ref{thm:main}]
    For any $\bfy\in \bmeta(\Omega^n)$ and any $(\bfx_1,\bfx_2,\dots,\bfx_n)\in\bmeta^{-1}(\bfy)$, define
    \begin{equation}\label{eq:def_g}
        g(\bfy) = f(\bfx_1,\bfx_2,\dots,\bfx_n),
    \end{equation}
    which is unique according to \eqref{eq:decomp}. We first verify that the function $g:\bR^m\to\bR$ in \eqref{eq:def_g} is well-defined. Suppose that $(\bfx_1,\bfx_2,\dots,\bfx_n),(\bfx_1',\bfx_2',\dots,\bfx_n')\in\bmeta^{-1}(\bfy)$ for some $\bfy\in \bmeta(\Omega^n)$. If $\bfy = 0$, then by Assumption~\ref{asp:eta} (ii), there exists $i\neq j$ such that $\bfx_i=\bfx_j$, which leads to $f(\bfx_1,\bfx_2,\dots,\bfx_n) = 0$. By the same argument, we also have $f(\bfx_1',\bfx_2',\dots,\bfx_n') = 0$. If $\bfy \neq 0$, by Assumption~\ref{asp:eta} (iii), there exists a permutation $\sigma\in S_n$ such that $(\bfx_1',\bfx_2',\dots,\bfx_n') = (\bfx_{\sigma(1)},\bfx_{\sigma(2)},\dots,\bfx_{\sigma(n)})$. Thus, it holds that
    \begin{equation*}
        \bfy = \bmeta(\bfx_1',\bfx_2',\dots,\bfx_n') = (-1)^\sigma \bmeta(\bfx_1,\bfx_2,\dots,\bfx_n) = (-1)^\sigma \bfy,
    \end{equation*}
    which leads to $(-1)^\sigma=1$ since $\bfy\neq 0$. One can then conclude that
    \begin{equation*}
        f(\bfx_1',\bfx_2',\dots,\bfx_n') = (-1)^\sigma f(\bfx_1,\bfx_2,\dots,\bfx_n) = f(\bfx_1,\bfx_2,\dots,\bfx_n).
    \end{equation*}

    Furthermore, the function $g$ defined in \eqref{eq:def_g} is odd since for any $\sigma\in S_n$ with $(-1)^\sigma = -1$, one has
    \begin{align*}
        g( - \bmeta(\bfx_1,\bfx_2,\dots,\bfx_n)) & = g(\bmeta(\bfx_{\sigma(1)},\bfx_{\sigma(2)},\dots,\bfx_{\sigma(n)})) =  f(\bfx_{\sigma(1)},\bfx_{\sigma(2)},\dots,\bfx_{\sigma(n)}) \\
        & = - f(\bfx_1,\bfx_2,\dots,\bfx_n) = - g(\bmeta(\bfx_1,\bfx_2,\dots,\bfx_n)).
    \end{align*}
    
    Finally, we prove that $g$ is continuous. Let $E\subset\bR$ be any closed subset of $\bR$. The continuity of $f$ implies that $f^{-1}(E)$ is a closed subset of $\Omega^n$. Noticing that $\Omega$ is compact, we have that $f^{-1}(E)$ is compact. Therefore, $g^{-1}(E) = \bmeta(f^{-1}(E))$ is also compact by the continuity of $\bmeta$, which implies that $g^{-1}(E)$ is closed.  The function $g$ is thus continuous.
\end{proof}

The proof idea of Theorem~\ref{thm:main} is similar to that of \cite{dym2024low}*{Proposition 2.3} and \cite{chen2022representation}*{Theorem 2.1}, in the sense that all these works define $g$ by taking the same value (up to a sign) on an orbit or union of orbits. However, there are still some crucial differences. In particular, \cites{dym2024low,chen2022representation} employ the quotient space of the group action, which is not available in our setting due to different signs on the same orbit. Additionally, we have to combine all orbits with at least two identical particles due to the special property of antisymmetric functions described in Remark~\ref{rmk:x_i=x_j}, while the basis functions in \cites{dym2024low,chen2022representation} induce an injective map on the quotient.

At the end of this subsection, we comment on the comparison with another representation in \cite{hutter2020representing} that proves that any antisymmetric function $f:\Omega^n\to\bR$ can be represented as
\begin{equation*}
    f(\bfx) = \text{det}\begin{pmatrix}
        g_1(\bfx_1 | \bfx_{\neq 1}) & g_2(\bfx_1 | \bfx_{\neq 1}) & \cdots & g_n(\bfx_1 | \bfx_{\neq 1}) \\
        g_1(\bfx_2 | \bfx_{\neq 2}) & g_2(\bfx_2 | \bfx_{\neq 2}) & \cdots & g_n(\bfx_2 | \bfx_{\neq 2}) \\
        \vdots & \vdots & \ddots & \vdots \\
        g_1(\bfx_n | \bfx_{\neq n}) & g_2(\bfx_n | \bfx_{\neq n}) & \cdots & g_n(\bfx_n | \bfx_{\neq n})
    \end{pmatrix},
\end{equation*}
where $g_i(\bfx_j | \bfx_{\neq j})$ is symmetric in $\bfx_{\neq j}:=(\bfx_1,\dots,\bfx_{j-1},\bfx_{j+1},\dots,\bfx_n)$. However, when $d>1$, $g_1,g_2,\dots,g_n$ may be discontinuous even if $f$ is continuous. In contrast, in our representation, $g$ is proved as continuous when $f$ is. The continuity is crucial when modeling $g$ or $g_1,g_2,\dots,g_n$ by neural networks, as it is usually easier to train a neural network when the target function has better regularity and it is well known that fully connected neural networks are dense in the space of continuous functions on a compact domain equipped with $L^\infty$-norm. We also remark that it is possible that $g$ might not admit regularity beyond continuity, such as Lipschitz continuity or continuous differentiability, even if $f$ is Lipschitz continuous or continuously differentiable. This will be discussed in detail in Section~\ref{sec:sing_locus}.

 \subsection{Validation of Assumption~\ref{asp:eta}}
 \label{sec:construct_eta}

 In this subsection we construct $\bmeta = (\eta_1,\eta_2,\dots,\eta_m)$ satisfying Assumption~\ref{asp:eta}. The construction of $\bmeta$ is based on a set of symmetric functions that can distinguish different orbits of $S_n$.

 \begin{assumption}\label{asp:psi}
     Given $d\geq 1$ and $n\geq 1$. Assume that $\bmpsi = (\psi_1,\psi_2,\dots,\psi_q):(\bR^d)^n\to\bR^q$ satisfies the followings:
     \begin{itemize}
         \item[(i)] $\psi_k:(\bR^d)^n\to\bR$ is symmetric and continuous for every $k\in\{1,2,\dots,q\}$.
         \item[(ii)] $\bmpsi(\bfx_1,\bfx_2,\dots,\bfx_n) = \bmpsi(\bfx_1',\bfx_2',\dots,\bfx_n')$ if and only if there exists some $\sigma\in S_n$ such that $\bfx_{\sigma(i)} = \bfx_i',\ \forall~i\in\{1,2,\dots,n\}$.
         \item[(iii)] $\bmpsi(\bfx_1,\bfx_2,\dots,\bfx_n)\neq 0$ for all $\bfx_1,\bfx_2,\dots,\bfx_n\in\bR^d$.
     \end{itemize}
 \end{assumption}

\begin{remark}
    In Assumption~\ref{asp:psi}, Condition (iii) is automatically satisfied as long as at least one $\psi_k$ is a nonzero constant function. Consequently, the central requirement is Condition~(ii), namely that $\bmpsi$ separates the orbits of the $S_n$-action. Several constructions of such orbit-separating maps $\bmpsi$ have appeared in the literature. The most direct approach is to employ generators of symmetric polynomials, for instance multisymmetric power sums; see \cite{chen2022representation}. This yields $q = \binom{n+d}{d}$,
    which scales polynomially in $n$ when the particle dimension $d$ is fixed, a regime consistent with many applications, including many-body physics. Beyond this classical construction, \cite{wang2023polynomial} establishes the existence of functions $\psi_1,\psi_2,\ldots,\psi_q$ satisfying Assumption~\ref{asp:psi} with $q=\mathrm{poly}(n,d)$. Moreover, \cite{dym2024low} proves that linear scaling in both $n$ and $d$ suffices, say $q=2nd+2$.
\end{remark}

The next proposition verifies Assumption~\ref{asp:eta} with an explicit number of anti-symmetric basis functions.

\begin{proposition}\label{prop:construct_eta}
    Suppose that Assumption~\ref{asp:psi} holds, then there exist anti-symmetric and continuous functions $\eta_1,\eta_2,\dots,\eta_m$ satisfying Assumption~\ref{asp:eta} with $m = \left(\frac{n(n-1)}{2} \cdot(d-1) + 1\right) \cdot q$ and $\Omega = \bR^d$.
\end{proposition}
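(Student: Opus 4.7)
The plan is to build $\bmeta$ from two ingredients: a collection of Vandermonde-type antisymmetric functions parameterized by directions $\bfw\in\bR^d$, together with the separating symmetric tuple $\bmpsi$. Concretely, I will pick vectors $\bfw_1,\dots,\bfw_N\in\bR^d$ with $N=\binom{n}{2}(d-1)+1$ in \emph{general position}, meaning that any $d$ of them are linearly independent (for instance, the moment curve choice $\bfw_\ell=(1,t_\ell,t_\ell^2,\dots,t_\ell^{d-1})$ for distinct $t_\ell\in\bR$ works, by the nonvanishing of a $d\times d$ Vandermonde determinant). For each $\ell$ I set
\begin{equation*}
    V_\ell(\bfx_1,\dots,\bfx_n) \;=\; \prod_{1\le i<j\le n}\bigl(\bfw_\ell\cdot\bfx_i-\bfw_\ell\cdot\bfx_j\bigr),
\end{equation*}
which is the Vandermonde determinant in the scalars $\bfw_\ell\cdot\bfx_i$, hence antisymmetric and continuous. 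Then I define the full basis by $\eta_{\ell,0}=V_\ell$ and $\eta_{\ell,k}=V_\ell\cdot\psi_k$ for $k=1,\dots,q$, giving $m=N(q+1)$ functions. Each $\eta_{\ell,k}$ is a product of an antisymmetric and a symmetric continuous function, so condition (i) of Assumption~\ref{asp:eta} is immediate.

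Condition (ii) splits into two parts. If some $\bfx_i=\bfx_j$ then every $V_\ell$ vanishes, so $\bmeta=0$. For the converse, assume the $\bfx_i$'s are pairwise distinct; I need at least one $V_\ell(\bfx)\neq 0$, i.e., some $\bfw_\ell$ avoiding every hyperplane $H_{ij}:=\{\bfw:\bfw\cdot(\bfx_i-\bfx_j)=0\}$, $i<j$. Here is where the general-position choice pays off: since any $d$ of the $\bfw_\ell$'s are linearly independent, no hyperplane of $\bR^d$ contains more than $d-1$ of them, so the $\binom{n}{2}$ hyperplanes $H_{ij}$ together cover at most $\binom{n}{2}(d-1)$ of the $\bfw_\ell$'s. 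Since $N>\binom{n}{2}(d-1)$, at least one $\bfw_\ell$ escapes them all, yielding $V_\ell(\bfx)\neq 0$. This pigeonhole-in-general-position step is the main obstacle; verifying the other two conditions is routine, and producing explicit $\bfw_\ell$'s of the required type is standard.

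For condition (iii), suppose $\bmeta(\bfx)=\bmeta(\bfx')\neq 0$. Then some coordinate of $\bmeta$ is nonzero, and examining the block $\eta_{\ell,0}$ for some $\ell$ shows that $V_\ell(\bfx)=V_\ell(\bfx')\neq 0$ for at least one $\ell$. Fixing such an $\ell$ and dividing $\eta_{\ell,k}(\bfx)=\eta_{\ell,k}(\bfx')$ by the common nonzero value $V_\ell(\bfx)=V_\ell(\bfx')$ for every $k=1,\dots,q$ gives $\psi_k(\bfx)=\psi_k(\bfx')$ for all $k$, i.e., $\bmpsi(\bfx)=\bmpsi(\bfx')$. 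Assumption~\ref{asp:psi}(ii) then furnishes a permutation $\sigma\in S_n$ with $\bfx_i'=\bfx_{\sigma(i)}$, which is exactly what condition (iii) demands. Counting, we have used $m=N(q+1)=\bigl(\tfrac{n(n-1)}{2}(d-1)+1\bigr)(q+1)$ basis functions, matching the stated bound.
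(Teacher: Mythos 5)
Your proposal is correct and follows essentially the same route as the paper: the same general-position family of directions $\bfw_\ell$ with the same pigeonhole count $\binom{n}{2}(d-1)+1$, the same Vandermonde determinants $V_\ell$ in the projected coordinates, and the same assembly $\bmeta=(V_\ell, V_\ell\bmpsi)_\ell$ with division by a common nonzero $V_\ell$ to reduce condition (iii) to Assumption~\ref{asp:psi}. The only (welcome) addition is your explicit moment-curve realization of the $\bfw_\ell$'s, which the paper leaves implicit.
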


We need the following two lemmas for proving Proposition~\ref{prop:construct_eta}. The idea of considering the one-dimensional projections of $\bfx_1,\bfx_2,\dots,\bfx_n$ is from \cite{wang2023polynomial}.

\begin{lemma}\label{lem:construct_w}
    Let $\bfw_1,\bfw_2,\dots,\bfw_p\in\bR^d$ with $p= \frac{n(n-1)}{2} \cdot(d-1) + 1$ satisfy that any $d$ vectors among them form a basis of $\bR^d$. Then for any $(\bfx_1,\bfx_2,\dots,\bfx_n)\in (\bR^d)^n$ with $\bfx_i\neq \bfx_j,\ \forall~i\neq j$, there exists $k\in\{1,2,\dots,p\}$ such that $\bfw_k^\top \bfx_i \neq \bfw_k^\top \bfx_j,\ \forall~i\neq j$.
\end{lemma}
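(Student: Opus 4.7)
The plan is to prove the lemma by a straightforward pigeonhole / counting argument on the pairs $(i,j)$. For each unordered pair $\{i,j\}$ with $i\neq j$, define the hyperplane
\[
H_{ij} = \{\bfw\in\bR^d : \bfw^\top(\bfx_i-\bfx_j) = 0\}.
\]
Since $\bfx_i\neq\bfx_j$, this is a linear subspace of dimension exactly $d-1$. An index $k$ ``fails'' for the pair $\{i,j\}$ precisely when $\bfw_k \in H_{ij}$, i.e., when $\bfw_k^\top\bfx_i = \bfw_k^\top\bfx_j$. The goal is to show that some $k\in\{1,\dots,p\}$ does not fail for any pair.

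The key geometric observation I would use is that at most $d-1$ of the vectors $\bfw_1,\dots,\bfw_p$ can lie in $H_{ij}$. Indeed, if $d$ of them lay in the $(d-1)$-dimensional subspace $H_{ij}$, they would be linearly dependent, contradicting the hypothesis that any $d$ of the $\bfw_k$'s form a basis of $\bR^d$. This is exactly the point where the general-position assumption on the $\bfw_k$'s is consumed, and it is really the only nontrivial ingredient.

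To finish, I would sum this bound over all $\binom{n}{2} = \tfrac{n(n-1)}{2}$ pairs: the total number of incidences $(k,\{i,j\})$ with $\bfw_k\in H_{ij}$ is at most $\tfrac{n(n-1)}{2}(d-1) = p-1$. Hence the set of indices $k$ that fail for \emph{some} pair has cardinality at most $p-1 < p$, so there exists $k\in\{1,\dots,p\}$ with $\bfw_k\notin H_{ij}$ for every $i\neq j$, which gives $\bfw_k^\top\bfx_i\neq \bfw_k^\top\bfx_j$ for all $i\neq j$, as required. No genuine obstacle is anticipated; the choice $p=\tfrac{n(n-1)}{2}(d-1)+1$ is tuned to make the pigeonhole step tight.
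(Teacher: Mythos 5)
Your proof is correct and is essentially the same as the paper's: both bound the number of indices $k$ failing for a fixed pair $\{i,j\}$ by $d-1$ using the general-position hypothesis (you phrase it as $d$ vectors lying in the hyperplane $H_{ij}$ being dependent; the paper phrases it as a basis annihilating $\bfx_i-\bfx_j$ forcing $\bfx_i=\bfx_j$), and then both conclude by the same union bound over the $\binom{n}{2}$ pairs.
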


\begin{proof}
    The proof is modified from the proof of Lemma C.4 in \cite{wang2023polynomial}.
    Given any $i,j\in\{1,2,\dots,n\}$ with $i\neq j$, or equivalently $\bfx_i\neq \bfx_j$, and any $d$ vectors $\bfw_{k_1},\bfw_{k_2},\dots,\bfw_{k_d}$ that form a basis, there exists some $s\in\{1,2,\dots,d\}$ such that $\bfw_{k_s}^\top \bfx_i \neq \bfw_{k_s}^\top \bfx_j$; otherwise one has $\bfx_i = \bfx_j$ and this is a contradiction. Therefore, it holds that
    \begin{equation*}
        \#\left\{k\in\{1,2,\dots,p\} : \bfw_k^\top \bfx_i = \bfw_k^\top \bfx_j\right\} \leq d-1,
    \end{equation*}
    for all pair $(i,j)$ with $i\neq j$. Since $p > \frac{n(n-1)}{2} \cdot(d-1)$, there exists at least one $k\in \{1,2,\dots,p\}$ such that $\bfw_k^\top \bfx_i \neq \bfw_k^\top \bfx_j,\ \forall~i\neq j$.
\end{proof}

\begin{lemma}\label{lem:construct_phi}
    Let $p= \frac{n(n-1)}{2} \cdot(d-1) + 1$. There exist anti-symmetric and continuous functions $\phi_1,\phi_2,\dots,\phi_p$ such that for any $\bfx = (\bfx_1,\bfx_2,\dots,\bfx_n)\in(\bR^d)^n$, $\phi_1(\bfx) = \phi_2(\bfx) = \cdots = \phi_p(\bfx) = 0$ if and only if $\bfx_i=\bfx_j$ for some $i,j\in\{1,2,\dots,n\}$ with $i\neq j$.
\end{lemma}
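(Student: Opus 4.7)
The plan is to realize each $\phi_k$ as a one-dimensional Vandermonde-type product along a carefully chosen direction $\bfw_k \in \bR^d$, exploiting the fact that the classical Vandermonde polynomial is already the canonical anti-symmetric function of $n$ real scalars.

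First, I would fix vectors $\bfw_1, \bfw_2, \ldots, \bfw_p \in \bR^d$ with the property that any $d$ of them are linearly independent, which is the hypothesis of Lemma~\ref{lem:construct_w}. Such vectors exist explicitly: one may take points on the moment curve, e.g.\ $\bfw_k = (1, t_k, t_k^2, \ldots, t_k^{d-1})^\top$ for any $p$ distinct reals $t_1, \ldots, t_p$, since any $d$ of these form a Vandermonde matrix and hence a basis of $\bR^d$. Then define, for each $k \in \{1,2,\dots,p\}$,
\begin{equation*}
    \phi_k(\bfx_1, \bfx_2, \ldots, \bfx_n) \;:=\; \prod_{1 \leq i < j \leq n} \bigl( \bfw_k^\top \bfx_i - \bfw_k^\top \bfx_j \bigr).
\end{equation*}

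Next I would verify the three required properties. Continuity is immediate since each $\phi_k$ is a polynomial in the coordinates of $\bfx_1,\dots,\bfx_n$. For anti-symmetry, observe that $\phi_k(\bfx_1,\dots,\bfx_n) = V(\bfw_k^\top \bfx_1, \ldots, \bfw_k^\top \bfx_n)$, where $V(a_1,\dots,a_n) = \prod_{i<j}(a_i - a_j)$ is the standard Vandermonde polynomial; since $V$ is anti-symmetric under permutations of its arguments (it equals, up to sign, the determinant of $(a_i^{j-1})_{i,j}$), permuting the $\bfx_i$'s produces exactly the signature factor $(-1)^\sigma$. Finally, for the zero-set characterization: if $\bfx_i = \bfx_j$ for some $i \neq j$, then $\bfw_k^\top \bfx_i = \bfw_k^\top \bfx_j$ for every $k$, so the factor $(\bfw_k^\top \bfx_i - \bfw_k^\top \bfx_j)$ vanishes and $\phi_k(\bfx) = 0$ for all $k$. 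Conversely, if the $\bfx_i$'s are pairwise distinct, then Lemma~\ref{lem:construct_w} supplies some $k^* \in \{1,\ldots,p\}$ with $\bfw_{k^*}^\top \bfx_i \neq \bfw_{k^*}^\top \bfx_j$ for all $i \neq j$; every factor in $\phi_{k^*}(\bfx)$ is then nonzero, so $\phi_{k^*}(\bfx) \neq 0$.

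I do not anticipate a serious obstacle here: the construction is essentially forced once one accepts the one-dimensional projection trick from \cite{wang2023polynomial} used in Lemma~\ref{lem:construct_w}. The only mildly delicate point is making sure the Vandermonde product is genuinely anti-symmetric (and not merely symmetric up to sign on transpositions one has to check), which is handled by appealing to the determinantal identity $\prod_{i<j}(a_i - a_j) = \det(a_i^{j-1})$ rather than unpacking the sign change pair by pair.
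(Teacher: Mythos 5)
Your proposal is correct and follows essentially the same route as the paper: project onto directions $\bfw_k$ with the property of Lemma~\ref{lem:construct_w} and take the one-dimensional Vandermonde determinant/product of the projected scalars, then invoke Lemma~\ref{lem:construct_w} for the converse direction of the zero-set characterization. Your explicit moment-curve choice of the $\bfw_k$ and your writing the Vandermonde factorization as a product $\prod_{i<j}$ (where the paper's displayed formula has an evident typo with $\sum_{i<j}$) are minor refinements, not a different argument.
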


\begin{proof}
    Let $\bfw_1,\bfw_2,\dots,\bfw_p\in\bR^d$ be vectors as in Lemma~\ref{lem:construct_w}. Define for any $k\in\{1,2,\dots,p\}$ that
    \begin{equation*}
        \phi_k(\bfx) = \det\begin{pmatrix}
            1 & \bfw_k^\top \bfx_1 & \cdots & (\bfw_k^\top \bfx_1)^{n-1} \\
            1 & \bfw_k^\top \bfx_2 & \cdots & (\bfw_k^\top \bfx_2)^{n-1} \\
            \vdots & \vdots & \ddots & \vdots \\
            1 & \bfw_k^\top \bfx_n & \cdots & (\bfw_k^\top \bfx_n)^{n-1}
        \end{pmatrix}
        = \prod_{1\leq i<j\leq n} (\bfw_k^\top \bfx_j - \bfw_k^\top \bfx_i),
    \end{equation*}
    which is the Vandermonde matrix and is an anti-symmetric and continuous function. Consider any $(\bfx_1,\bfx_2,\dots,\bfx_n)\in (\bR^d)^n$ with $\bfx_i\neq \bfx_j,\ \forall~i\neq j$. By Lemma~\ref{lem:construct_w}, there exists $k\in\{1,2,\dots,p\}$ such that $\bfw_k^\top \bfx_i \neq \bfw_k^\top \bfx_j,\ \forall~i\neq j$, which implies that $\phi_k(\bfx) \neq 0$. Conversely, if $\bfx_i = \bfx_j$ for some $i\neq j$, then it is clear that $\phi_1(\bfx) = \phi_2(\bfx) = \cdots = \phi_p(\bfx) = 0$ by Remark~\ref{rmk:x_i=x_j}.
\end{proof}

Now we can present the proof of Proposition~\ref{prop:construct_eta}.

\begin{proof}[Proof of Proposition~\ref{prop:construct_eta}]
    Let $p= \frac{n(n-1)}{2} \cdot(d-1) + 1$ and let $\phi_1,\phi_2,\dots,\phi_p$ be the anti-symmetric and continuous functions in Lemma~\ref{lem:construct_phi}. Define $\bmeta = (\eta_1,\eta_2,\cdots,\eta_m)$ with $m = pq$ as follows:
    \begin{equation*}
        \bmeta = (\phi_1 \bmpsi, \phi_2\bmpsi,\dots, \phi_p\bmpsi ),
    \end{equation*}
    where $\bmpsi = (\psi_1,\psi_2,\dots,\psi_q)$ collects symmetric and continuous functions satisfying Assumption~\ref{asp:psi}. We then verify the three conditions in Assumption~\ref{asp:eta}.
    \begin{itemize}
        \item Condition (i) holds since each $\phi_k$ is anti-symmetric and each $\psi_l$ is symmetric.
        \item According to Assumption~\ref{asp:psi} (iii), $\bmeta(\bfx) = \bmeta(\bfx_1,\bfx_2,\dots,\bfx_n)=0$ if and only if $\phi_1(\bfx) = \phi_2(\bfx) = \cdots = \phi_p(\bfx) = 0$. Then Condition (ii) follows directly from Lemma~\ref{lem:construct_phi}.
        \item Suppose that $\bmeta(\bfx_1,\bfx_2,\dots,\bfx_n) = \bmeta(\bfx_1',\bfx_2',\dots,\bfx_n') \neq 0$. By Condition (ii), we have $\bfx_i\neq \bfx_j$ and $\bfx_i'\neq\bfx_j'$ for any pair $(i,j)$ with $i\neq j$. It then follows from Lemma~\ref{lem:construct_phi} that $\phi_k(\bfx) = \phi_k(\bfx')\neq 0$, where $\bfx = (\bfx_1,\bfx_2,\dots,\bfx_n)$ and $\bfx' = (\bfx_1',\bfx_2',\dots,\bfx_n')$, for some $k\in\{1,2,\dots,p\}$, which combined with $\phi_k(\bfx)\bmpsi(\bfx) = \phi_k(\bfx')\bmpsi(\bfx')$ yields that $\bmpsi(\bfx) = \bmpsi(\bfx')$. One can thus apply Assumption~\ref{asp:psi} (ii) to conclude that there exists some $\sigma\in S_n$ such that $\bfx_{\sigma(i)} = \bfx_i',\ \forall~i\in\{1,2,\dots,n\}$, which proves Condition (iii) in Assumption~\ref{asp:eta}.
    \end{itemize}
    The proof is hence completed.
\end{proof}

\begin{remark}
    After the initial post of this paper, Lemma~\ref{lem:construct_w} and Lemma~\ref{lem:construct_phi} are improved with $p=dn+1$ in~\cite{ye2024widetilde}. Furthermore, it is established in~\cite{dym2024equivariant} that the minimal $p$ such that Lemma~\ref{lem:construct_w} and Lemma~\ref{lem:construct_phi} hold is $p=n(d-1)$. These works can thus improve the choice of $p$ in the proof of Proposition~\ref{prop:construct_eta}.
\end{remark}

\subsection{Singular locus of the anti-symmetric basis functions} 
\label{sec:sing_locus}
We discuss the singular locus of $\bmeta:(\bR^d)^n\to\bR^m$ in this subsection. The following proposition proves that the set of singular points of $\bmeta$ contains
\begin{equation}\label{eq:sing}
    \mathbf{Sing}_{d,n}:=\left\{\bfx =(\bfx_1,\bfx_2,\dots,\bfx_n)\in (\bR^{d})^n\ |\ \exists~i_1\neq i_2,\ \bfx_{i_1}=\bfx_{i_2}\right\}.
\end{equation} 

\begin{proposition}\label{prop:singular1}
    Given $d,n\geq 1$ and let $\bmeta:(\bR^d)^n\to\bR^m$ be anti-symmetric and continuously differentiable. For any $\bfx\in \mathbf{Sing}_{d,n}$, the Jacobian matrix $J\bmeta(\bfx)\in\bR^{m\times nd}$ is column-rank-deficient.
\end{proposition}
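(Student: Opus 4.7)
The plan is to extract a linear relation among the columns of $J\bmeta(\bfx)$ from the transposition that fixes $\bfx$. Given $\bfx \in \mathbf{Sing}_{d,n}$, choose indices $i_1\neq i_2$ with $\bfx_{i_1}=\bfx_{i_2}$ and let $\sigma\in S_n$ be the transposition swapping $i_1$ and $i_2$, so that $(-1)^\sigma=-1$. The associated permutation map $P_\sigma:(\bR^d)^n\to(\bR^d)^n$, $P_\sigma(\bfx_1,\dots,\bfx_n)=(\bfx_{\sigma(1)},\dots,\bfx_{\sigma(n)})$, is linear; its matrix $P\in\bR^{nd\times nd}$ is a block permutation with $P^2=I$, and the anti-symmetry of each component of $\bmeta$ yields the functional identity $\bmeta\circ P_\sigma = -\bmeta$ on all of $(\bR^d)^n$. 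Differentiating this identity via the chain rule and evaluating at the fixed point $\bfx$ (for which $P_\sigma\bfx=\bfx$ precisely because $\bfx_{i_1}=\bfx_{i_2}$) gives
\begin{equation*}
    J\bmeta(\bfx)\cdot P \;=\; -J\bmeta(\bfx),\qquad\text{equivalently,}\qquad J\bmeta(\bfx)(I+P)\;=\;0.
\end{equation*}

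Consequently every vector in the image of $I+P$, i.e.\ in the $+1$-eigenspace of $P$, lies in $\ker J\bmeta(\bfx)$. This eigenspace consists exactly of those $(\bfv_1,\dots,\bfv_n)\in(\bR^d)^n$ with $\bfv_{i_1}=\bfv_{i_2}$ and $\bfv_k\in\bR^d$ arbitrary for $k\notin\{i_1,i_2\}$, so its dimension is $(n-1)d$. Whenever $n\geq 2$ and $d\geq 1$ this dimension is at least $1$, so $\ker J\bmeta(\bfx)$ is non-trivial and $J\bmeta(\bfx)$ is column-rank-deficient; one in fact obtains the sharper bound $\mathrm{rank}\,J\bmeta(\bfx)\leq nd-(n-1)d=d$. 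The case $n=1$ is vacuous since $\mathbf{Sing}_{d,1}=\varnothing$.

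I do not expect a substantive obstacle: the argument is a single symmetry calculation. The only care needed is the bookkeeping around the chain rule, and here the linearity of $P_\sigma$ makes $JP_\sigma(\bfx)=P$ automatic at every point, so the chain-rule step reduces to matrix multiplication. It is also worth noting that the argument uses only continuous differentiability of $\bmeta$ and its anti-symmetry, not the specific construction of $\bmeta$ from $\bmpsi$ in Proposition~\ref{prop:construct_eta}; this is why the conclusion applies to the general class of admissible basis functions.
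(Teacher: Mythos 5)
Your proof is correct and rests on the same underlying mechanism as the paper's: the anti-symmetry of $\bmeta$ under the transposition $\sigma$ swapping $i_1$ and $i_2$, combined with the fact that $\bfx$ is a fixed point of $P_\sigma$. The paper executes this pointwise, writing out the difference quotient for $\partial\bmeta/\partial x_{i_1,j}$ and showing it equals $-\partial\bmeta/\partial x_{i_2,j}$, which exhibits one pair of linearly dependent columns and already suffices for rank-deficiency. Your chain-rule formulation $J\bmeta(\bfx)(I+P)=0$ packages all of these relations at once and in fact yields strictly more: the kernel contains the entire $(n-1)d$-dimensional $+1$-eigenspace $\{(\mathbf{v}_1,\dots,\mathbf{v}_n) : \mathbf{v}_{i_1}=\mathbf{v}_{i_2}\}$, so not only do the columns indexed by $(i_1,j)$ and $(i_2,j)$ sum to zero, but every column indexed by $(k,j)$ with $k\notin\{i_1,i_2\}$ vanishes outright (these directions are tangent to the subspace $\{\bfx_{i_1}=\bfx_{i_2}\}$ on which $\bmeta\equiv 0$), giving the sharper bound $\mathrm{rank}\,J\bmeta(\bfx)\le d$. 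Your explicit dispatch of the degenerate case $n=1$, where $\mathbf{Sing}_{d,1}=\varnothing$, is a point the paper leaves implicit.
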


\begin{proof}
    Assume that $\bfx_{i_1} = \bfx_{i_2}$ for some $1\leq i_1<i_2\leq n$, and we have by Remark~\ref{rmk:x_i=x_j} that $\bmeta(\bfx) = 0$. For any $j\in\{1,2,\dots,d\}$, it can be computed that
    \begin{align*}
        \frac{\partial\bmeta(\bfx)}{\partial x_{i_1,j}} & = \lim_{t\rightarrow 0} \frac{\bmeta(\bfx_1,\dots,\bfx_{i_1-1}, \bfx_{i_1} + t \bfe_j, \bfx_{i_1+1},\dots, \bfx_{i_2-1},\bfx_{i_2},\bfx_{i_2+1},\dots,\bfx_n) - 0}{t} \\
    & = - \lim_{t\rightarrow 0} \frac{\bmeta(\bfx_1,\dots,\bfx_{i_1-1}, \bfx_{i_1}, \bfx_{i_1+1},\dots, \bfx_{i_2-1},\bfx_{i_2} + t \bfe_j,\bfx_{i_2+1},\dots,\bfx_n) - 0}{t} \\
    & = - \frac{\partial\bmeta(\bfx)}{\partial x_{i_2,j}},
    \end{align*}
    where $x_{i,j}$ is the $j$-th entry of $\bfx_i$ and $\bfe_j\in\bR^d$ has the $j$-th entry being $1$ and all other entries being $0$. This gives two linearly dependent columns of $J\bmeta(\bfx)$ and we can thus conclude that $J\bmeta(\bfx)$ is column-rank-deficient.
\end{proof}

Though it is in general unclear whether $\mathbf{Sing}_{d,n}$ is exactly the singular locus of all $\bmeta:(\bR^d)^n\to\bR^m$ satisfying Assumption~\ref{asp:eta}, this is true for the specific $\bmeta$ constructed in the proof of Proposition~\ref{prop:construct_eta}, with $\bmpsi = (\psi_1,\psi_2,\dots,\psi_q):(\bR^d)^n\to\bR^q$ generating the $\bR$-algebra of all symmetric polynomials. In particular, we consider the following polynomial map
\begin{equation}\label{eq:construct_eta}
    \bmeta = (\bmphi, \psi_1 \bmphi, \psi_2\bmphi,\dots, \psi_q\bmphi ) : (\bR^d)^n\to\bR^m = \bR^{p(q+1)},
\end{equation}
where $\bmphi=(\phi_1,\phi_2,\dots,\phi_p)$ collects anti-symmetric polynomials as in Lemma~\ref{lem:construct_phi} and $\bmpsi = (\psi_1,\psi_2,\dots,\psi_q)$ denotes generators of symmetric polynomials.

\begin{proposition}\label{prop:singular2}
    For any $d,n\geq 1$, consider $\bmeta:(\bR^d)^n\to\bR^m$ as in \eqref{eq:construct_eta}. $J\bmeta(\bfx)$ is of full-column-rank for any $\bfx\notin \mathbf{Sing}_{d,n}$.
\end{proposition}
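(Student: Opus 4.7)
The plan is to show that if $J\bmeta(\bfx)\bfv = 0$ for some $\bfv\in\bR^{nd}$ at a point $\bfx = (\bfx_1,\dots,\bfx_n)$ with pairwise distinct $\bfx_i$, then $\bfv = 0$. The blockwise structure of $\bmeta = (\bmphi,\psi_1\bmphi,\dots,\psi_q\bmphi)$ naturally splits the kernel condition into two pieces. From the $\bmphi$-block I directly obtain $J\bmphi(\bfx)\bfv = 0$. For the $\psi_l\bmphi$-block, the product rule yields, componentwise,
\begin{equation*}
J(\psi_l\phi_k)(\bfx)\bfv \;=\; \psi_l(\bfx)\bigl(J\phi_k(\bfx)\bfv\bigr) + \phi_k(\bfx)\bigl(J\psi_l(\bfx)\bfv\bigr) \;=\; \phi_k(\bfx)\bigl(J\psi_l(\bfx)\bfv\bigr),
\end{equation*}
after invoking $J\phi_k(\bfx)\bfv = 0$. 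Since $\bfx\notin\mathbf{Sing}_{d,n}$, Lemma~\ref{lem:construct_phi} supplies some $k$ with $\phi_k(\bfx)\neq 0$, so $J\bmpsi(\bfx)\bfv = 0$.

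The main step is then to upgrade $J\bmpsi(\bfx)\bfv = 0$ to $\bfv = 0$, using that $\bmpsi$ generates the whole $\bR$-algebra of symmetric polynomials on $(\bR^d)^n$. For any symmetric polynomial $f$, writing $f = F(\bmpsi)$ and applying the chain rule gives $\nabla f(\bfx)\cdot\bfv = 0$. Specializing to $f(\bfy_1,\dots,\bfy_n) = \sum_{i=1}^n h(\bfy_i)$ for an arbitrary polynomial $h:\bR^d\to\bR$ produces
\begin{equation*}
\sum_{i=1}^n \nabla h(\bfx_i)\cdot\bfv_i = 0.
\end{equation*}
It now suffices to exhibit, for each fixed index $i$ and each direction $\bfu\in\bR^d$, a polynomial $h$ with $\nabla h(\bfx_i) = \bfu$ and $\nabla h(\bfx_j) = 0$ for every $j\neq i$. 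Since the $\bfx_j$ are distinct, this is a routine Hermite interpolation: setting $Q(\bfy) = \prod_{j\neq i}\|\bfy - \bfx_j\|^2$ (which is nonzero at $\bfx_i$ and vanishes to first order at every $\bfx_j$ with $j\neq i$) and $h(\bfy) = Q(\bfx_i)^{-1}\langle \bfu,\bfy-\bfx_i\rangle Q(\bfy)$ does the job, as a direct computation shows. Substituting back forces $\bfu\cdot\bfv_i = 0$ for every $\bfu\in\bR^d$, hence $\bfv_i = 0$, and iterating over $i$ concludes $\bfv = 0$.

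The principal obstacle is this final step, which crucially uses that $\bmpsi$ consists of \emph{polynomial generators} of the invariant ring rather than merely an orbit-separating family as in Assumption~\ref{asp:psi}; this is exactly the strengthened hypothesis imposed in the paragraph preceding the proposition. Once arbitrary symmetric polynomials are available, the Hermite-interpolation argument is elementary, and the product-rule splitting in the first step is the natural way to exploit the particular multiplicative form of $\bmeta$.
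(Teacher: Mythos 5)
Your proof is correct, and it diverges from the paper's in its second half. The first step---splitting the kernel condition $J\bmeta(\bfx)\mathbf{v}=0$ across the blocks of $\bmeta$, applying the product rule, and using a nonvanishing $\phi_k$ to reduce everything to $J\bmpsi(\bfx)\mathbf{v}=0$---is the kernel-vector formulation of exactly the elementary row operations the paper performs on $J\bmeta(\bfx)$; the two are equivalent. Where you genuinely differ is at the end: the paper simply cites \cite{chen2022representation}*{Theorem 3.1} for the fact that $J\bmpsi(\bfx)$ has full column rank off $\mathbf{Sing}_{d,n}$, whereas you reprove it from scratch. Your argument---pushing the kernel condition through the chain rule to every symmetric polynomial $F(\bmpsi)$, specializing to $\sum_{i} h(\mathbf{y}_i)$, and building $h$ by Hermite interpolation so that $\nabla h$ equals a prescribed vector at $\bfx_i$ and vanishes at every other point---is sound: $Q(\bfy)=\prod_{j\neq i}\|\bfy-\bfx_j\|^2$ is a polynomial with $Q(\bfx_i)\neq 0$ and with both $Q$ and $\nabla Q$ vanishing at each $\bfx_j$, $j\neq i$, precisely because the points are distinct, so $\nabla h(\bfx_i)=\mathbf{u}$ and $\nabla h(\bfx_j)=0$ as claimed. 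What this buys is a self-contained proof (no external citation) and a transparent identification of exactly which property of $\bmpsi$ is being used---that it generates the full invariant ring, not merely that it separates orbits as in Assumption~\ref{asp:psi}---a distinction the paper only signals implicitly by strengthening the hypothesis on $\bmpsi$ in the paragraph before the proposition. The cost is an extra paragraph of interpolation that the citation would have absorbed.
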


\begin{proof}
    By performing elementary row operations on $J\bmeta(\bfx)$, we have that
    \begin{align*}
        J\bmeta(\bfx) = \begin{pmatrix}
            J\bmphi(\bfx) \\ \psi_1(\bfx) J\bmphi(\bfx) + \bmphi(\bfx) \nabla \psi_1(\bfx)^\top \\ \vdots \\ \psi_q(\bfx) J\bmphi(\bfx) + \bmphi(\bfx) \nabla \psi_q(\bfx)^\top
        \end{pmatrix}
        \to \begin{pmatrix}
            J\bmphi(\bfx) \\ \bmphi(\bfx) \nabla \psi_1(\bfx)^\top \\ \vdots \\ \bmphi(\bfx) \nabla \psi_q(\bfx)^\top
        \end{pmatrix}
        \to \begin{pmatrix}
            J\bmphi(\bfx) \\ \phi_1(\bfx) J\bmpsi(\bfx) \\ \vdots \\ \phi_p(\bfx) J\bmpsi(\bfx)
        \end{pmatrix}.
    \end{align*}
    Since $\bfx\notin \mathbf{Sing}_{d,n}$, Lemma~\ref{lem:construct_phi} guarantees that $\phi_{i_0}(\bfx)\neq 0$ for some $i_0\in\{1,2,\dots,p\}$, and \cite{chen2022representation}*{Theorem 3.1} proves that $J\bmpsi(\bfx)$ is of full-column-rank. Therefore, $J\bmeta(\bfx)$ is also of full-column-rank, which completes the proof.
\end{proof}

Combining Proposition~\ref{prop:singular1} and Proposition~\ref{prop:singular2}, one can conclude that for the anti-symmetric base function $\bmeta:(\bR^d)^n\to\bR^m$ in \eqref{eq:construct_eta}, the singular locus is exactly given by $\mathbf{Sing}_{d,n}$ in \eqref{eq:sing}, which is consistent with the symmetric setting \cite{chen2022representation}.

We then show with examples that even if $g$ inherits the continuity of $f$, as in Theorem~\ref{thm:main}, it may not inherit higher regularity like Lipschitz continuity and continuous differentiability, especially at the singular locus $\mathbf{Sing}_{d,n}$. In the examples below, we always set $d=1$, $n=2$, $\Omega = [-1,1]$, and
\begin{align*}
    & \bmphi(\bfx) = \bmphi(x_1,x_2) = x_1 - x_2, \\
    & \bmpsi(\bfx) = \bmpsi(x_1,x_2) = \left(x_1+x_2, x_1^2 + x_2^2\right), \\
    & \bmeta(\bfx) = \bmeta(x_1,x_2) = \left((x_1-x_2), (x_1-x_2)(x_1+x_2), (x_1-x_2)(x_1^2+x_2^2)\right).
\end{align*}
It is straightforward to verify Assumption~\ref{asp:eta} for the above $\bmeta$.

\medskip
\paragraph{\textbf{Counterexample for Lipschitz continuity}} 
    Consider
    \begin{equation*}
        f(\bfx) = f(x_1,x_2) = |x_1| - |x_2|,
    \end{equation*}
    which is anti-symmetric and Lipschitz continuous. Let $\bfx = (2\epsilon, 0)$ and $\bfx' = (\epsilon,-\epsilon)$, with $\epsilon>0$ being sufficiently small. Then it can be computed that
    \begin{equation*}
        g(\bmeta(\bfx)) = f(\bfx) = 2\epsilon,\quad g(\bmeta(\bfx')) = f(\bfx') = 0,
    \end{equation*}
    and
    \begin{equation}\label{eq:ex-eta}
        \bmeta(\bfx) = \left(2\epsilon,4\epsilon^2, 8\epsilon^3\right),\quad \bmeta(\bfx') = \left(2\epsilon, 0, 4\epsilon^3\right).
    \end{equation}
    Thus, we have that
    \begin{equation*}
        \lim_{\epsilon\to 0+} \frac{|g(\bmeta(\bfx)) - g(\bmeta(\bfx'))|}{\|\bmeta(\bfx) - \bmeta(\bfx')\|} = \lim_{\epsilon\to 0+} \frac{2\epsilon}{\|(0,4\epsilon^2,4\epsilon^3)\|} = \lim_{\epsilon\to 0+} \frac{1}{\|(0,2\epsilon,2\epsilon^2)\|} = +\infty,
    \end{equation*}
    which implies that $g$ is not Lipschitz continuous.

\medskip
\paragraph{\textbf{Counterexample for continuous differentiability}}
    Consider
    \begin{equation*}
        f(\bfx) = f(x_1,x_2) = x_1^{4/3} - x_2^{4/3},
    \end{equation*}
    which is anti-symmetric and continuously differentiable. Let $\bfx$ and $\bfx'$ be the same as before, which gives \eqref{eq:ex-eta} and
    \begin{equation*}
        g(\bmeta(\bfx)) = f(\bfx) = 2^{4/3} \epsilon^{4/3},\quad g(\bmeta(\bfx')) = f(\bfx')=0.
    \end{equation*}
    Assume that $g:\eta(\Omega^2)\to\bR$ can be extended to a continuously differentiable function on $\bR^3$, which is still denoted as $g$ for simplicity. Then there exists $\bmxi\in\bR^3$ located on the line segment connection $\bmeta(\bfx)$ and $\bmeta(\bfx')$, such that
    \begin{equation*}
        2^{4/3} \epsilon^{4/3} = g(\bmeta(\bfx)) - g(\bmeta(\bfx')) = \left\langle \nabla g(\bmxi), \bmeta(\bfx) - \bmeta(\bfx') \right\rangle = \left\langle \nabla g(\bmxi), (0,4\epsilon^2,4\epsilon^3) \right\rangle,
    \end{equation*}
    which leads to that
    \begin{equation*}
        \left\langle \nabla g(0,0,0), (0,4,0) \right\rangle = \lim_{\epsilon\to 0+} \left\langle \nabla g(\bmxi), (0,4,4\epsilon) \right\rangle = \lim_{\epsilon\to 0+} \frac{2^{4/3}}{\epsilon^{2/3}} = +\infty,
    \end{equation*}
    which is a contradiction. Therefore, $g$ cannot be extended to a continuously differentiable function on $\bR^3$.

\bibliographystyle{amsxport}
\bibliography{references}

\end{document}